\documentclass{amsart}
\usepackage{geometry}
\geometry{a4paper}
\geometry{hcentering}
\usepackage{graphicx}
\usepackage{amsrefs}
\usepackage[colorlinks]{hyperref}
\usepackage{mathpazo}

\newtheorem{theorem}{Theorem}[section]
\newtheorem{corollary}[theorem]{Corollary}

\theoremstyle{definition}
\newtheorem{definition}[theorem]{Definition}
\newtheorem{remark}[theorem]{Remark}
\newtheorem{example}[theorem]{Example}

\numberwithin{equation}{section}

\begin{document}
\title{Constructing topological biquandles via skew braces}
\author{Zhiyun Cheng}
\address{School of Mathematical Sciences, Beijing Normal University, Beijing 100875, China}
\email{czy@bnu.edu.cn}
\subjclass[2020]{57K12, 16T25}
\keywords{Topological biquandle, skew brace, Yang-Baxter equation}
\begin{abstract}
In this short note, we construct some non-trivial examples of topological biquandle. The key ingredient of the construction is the notion of skew brace.
\end{abstract}
\maketitle

\section{Introduction}\label{section1}
Quandle, which was independently introduced by Joyce \cite{Joy1982} and Matveev \cite{Mat1982} in 1982, is a set $Q$ equipped with a binary operation $\ast: Q\times Q\to Q$ satisfying several axioms motivated from Reidemeister moves in knot theory. For a given knot $K$ in $S^3$, similar to the knot group, one can associate a knot quandle $Q_K$ to $K$. It is known that two oriented knots have isomorphic knot quandles if and only if they are equivalent, or one can be obtained from the mirror image of the other one by reversing the orientation. Given a finite quandle $Q$, counting the homomorphisms from $Q_K$ to $Q$ gives rise to a coloring invariant Col$_Q(K)$ for a knot $K$. Analogous to the group homology, a homology theory for quandles was introduced in \cite{CJKLS2003}. By using quandle 2-cocycles, one can extend the coloring invariants Col$_Q(K)$ to the quandle cocycle invariants. As a generalization of the notion of quandle, biquandle, which is a set equipped with two binary operations, was introduced in \cite{KR2003} and explored in detail in \cite{FJK2004}. During the past twenty years, quandles and biquandles have been found to be closely related to Hopf algebras \cite{AG2003}, Frobenius algebras and Yang-Baxter equations \cite{CCEKS2008}, index type invariants \cite{Che2021} and quantum invariants \cite{BGPR2020}.

Similar to the notion of topological group, which is a topological space that is also a group such that the group operations are compatible with the topological structure, a topological quandle is a topological space with a quandle structure such that the binary operation is continues. Equipped with the compact-open topology, the set of all homomorphisms from a knot quandle to a fixed topological quandle also defines a knot invariant \cite{Rub2007}. This kind of knot invariants which take values in the set of topological spaces has been investigated deeply in recent years, see \cite{EM2016, CMS2018, ESZ2019}. A natural question is, for a given topological manifold, when does it admit a compatible non-trivial quandle structure? Surprisingly, recently it was proved by Tsvelikhovskiy that each topological manifold of positive dimension admits infinitely many non-trivial and non-isomorphic topological quandle structures \cite{Tsv2022}. It means that, unlike the group structures on topological manifolds, the quandle structures on topological manifolds are quite flexible and can be locally designed.

In this paper, we concern the construction of non-trivial topological biquandles. Here the \emph{non-triviality} means the topology is not the discrete topology and the biquandle is not a quandle. To the best knowledge of the author, at present we have no concrete non-trivial examples of topological biquandle. The main aim of this paper is to give some non-trivial examples of topological biquandles by using topological skew braces.

The outline of this paper is arranged as follows. In Section \ref{section2}, we take a brief review of the basics of biquandles and skew braces. The relation between skew braces and the set-theoretic solutions of the Yang-Baxter equation will also be discussed. In Section \ref{section3}, we give several concrete non-trivial examples of topological biquandles and discuss some applications of them.

\section{Background on biquandles and skew braces}\label{section2}
\subsection{Quandle and biquandle}
In this subsection, we give a quick introduction to the quandle theory.
\begin{definition}
A \emph{quandle} is a non-empty set $Q$ equipped with a binary operation $\ast: Q\times Q\to Q$ satisfying the following axioms:
\begin{enumerate}
  \item $\forall a\in Q, a\ast a=a$;
  \item $\forall b, c\in Q, \exists ! a\in Q$ such that $a\ast b=c$;
  \item $\forall a, b, c\in Q, (a\ast b)\ast c=(a\ast c)\ast(b\ast c)$.
\end{enumerate}
\end{definition}

These three quandle axioms correspond to the three Reidemeister moves in knot theory. As a consequence, if one suitably assigns an element of a finite quandle to each arc of a knot diagram, the number of quandle colorings does not depend on the choice of the knot diagram. Therefore, it gives rise to a knot invariant.

\begin{example}
Here we list some examples of quandle.
\begin{itemize}
  \item Any non-empty set $Q$ equipped with the binary operation $a\ast b=a$ $(\forall a, b\in Q)$ is called a \emph{trivial quandle}.
  \item Let $G$ be a group, for any $a, b\in G$ the binary operation $a\ast b=b^{-1}ab$ turns $G$ into a quandle, called the \emph{conjugation quandle} of $G$.
  \item Let $G$ be a group, for any $a, b\in G$ the binary operation $a\ast b=ba^{-1}b$ turns $G$ into a quandle, called the \emph{core quandle} of $G$.
  \item Let $\Sigma_g$ be a closed orientable surface of genus $g$ and $\mathcal{D}_g$ the set of isotopy classes of simple closed curves in $\Sigma_g$, for any $\alpha, \beta\in\mathcal{D}_g$, the binary operation $\alpha\ast\beta=T_{\beta}(\alpha)$ turns $\mathcal{D}_g$ into a quandle, called the \emph{Dehn quandle} of $\Sigma_g$ \cite{Zab1999}. Here $T_{\beta}$ denotes the Dehn twist along the curve $\beta$.
\end{itemize}
\end{example}

The notion of a quandle can be extended to that of a biquandle.

\begin{definition}
A \emph{biquandle} is a set $X$ with a bijective map $r: X\times X\to X\times X$, which sends $(a, b)$ to $r(a, b)=(b\star a, a\ast b)$ and satisfies the following axioms:
\begin{enumerate}
  \item The map $r$ satisfies the set-theoretic Yang-Baxter equation
  \begin{center}
  $(\text{Id}\times r)\circ(r\times\text{Id})\circ(\text{Id}\times r)=(r\times\text{Id})\circ(\text{Id}\times r)\circ(r\times\text{Id})$.
  \end{center} 
  \item For any $a\in X$, the maps $\star a: X\to X$ and $\ast a: X\to X$ are both bijective. These two maps induce a unique bijective map $S: X\times X\to X\times X$ such that 
  \begin{center}
  $S(b\star a, a)=(a\ast b, b)$.
  \end{center}
  \item For any $a\in X$, the map $S$ induces a bijection $\tau: X\to X$ on the diagonal 
  \begin{center}
  $S(a, a)=(\tau(a), \tau(a))$.
  \end{center}
\end{enumerate}
\end{definition}

For a given knot diagram, by associating each semi-arc with an element of a finite biquandle $X$ such that at each crossing point the four colors satisfy some conditions, one obtains a biquandle coloring invariant of knots. It is worthy to point out that, for classical knots in $S^3$, the knot biquandle contains exactly the same information as that of the knot quandle \cite{Ish2020}. More precisely, there exists a one-to-one correspondence between biquandle colorings and quandle colorings \cite{IT2024}. However, for some generalized knot theories, such as the virtual knot theory, invariants derived from biquandles contain more information comparing with that derived from quandles. 

\begin{example}
Here we list some examples of biquandles.
\begin{itemize}
  \item Let $(Q, \ast)$ be a quandle, for any $a, b\in Q$, the binary operation $a\star b=a$ turns $Q$ into a biquandle.
  \item Let $G$ be a group, for any $a, b\in G$, we define $a\ast b=b^{-1}a^{-1}b$ and $b\star a=a^2b$. Now $(G, \ast, \star)$ is a biquandle, which is called the \emph{Wada biquandle}.
  \item Let $X$ be a $\mathbb{Z}[t^{\pm1}, s^{\pm1}]$-module, the two binary operations $a\ast b=ta+(1-st)b$ and $b\star a=sb$ define a biquandle $(X, \ast, \star)$, which is called an \emph{Alexander biquandle}.
\end{itemize}
\end{example}

\subsection{Topological quandle and topological biquandle}
The concept of topological quandle was introduced by Rubinsztein in \cite{Rub2007}, which can be used to associate topological spaces to knots.

\begin{definition}
A \emph{topological quandle} is a topological space $Q$ equipped with a continuous binary operation $\ast: Q\times Q\to Q$ such that $(Q, \ast)$ is a quandle.
\end{definition}

Note that for a given topological quandle $Q$ and any $a\in Q$, the map $\ast a: Q\to Q$ defines an automorphism of $Q$. The reader is referred to \cite{EM2016} for some details of topological quandles. 

\begin{example}
Here we give some examples of topological quandles.
\begin{itemize}
  \item Any quandle equipped with the discrete topology is a topological quandle. On the other hand, any topological space equipped with the trivial quandle structure is a topological quandle.
  \item Any topological group $G$ equipped with the binary operation $a\ast b=b^{-1}ab$ is a topological quandle, called the \emph{conjugation quandle} of $G$.
  \item Let $M$ be a Riemannian manifold, and for any point $y\in M$ there exists an involution $i_y: M\to M$ such that $y$ is an isolated fixed point. Then $x\ast y=i_y(x)$ turns $M$ into a topological quandle. In particular, $S^n$ admits a topological quandle structure.
\end{itemize}
\end{example}

The notion of topological biquandle can be similarly defined.

\begin{definition}
A \emph{topological biquandle} is a topological space $X$ equipped with two continuous binary operations $\ast: X\times X\to X$ and $\star: X\times X\to X$ such that $(X, \ast, \star)$ is a biquandle.
\end{definition}

Obviously, assigning a biquandle the discrete topology turns it into a topological biquandle. On the other hand, any topological quandle $(X, \ast)$ is a topological biquandle if we define $b\star a=b$ for any $a, b\in X$.

\begin{remark}
In \cite{Hor2019}, Horvat also used the term topological biquandle to refer to a topologically defined biquandle associated to an oriented link, which turns out to be a quotient of the fundamental biquandle of the link. Hence the meaning of topological biquandle in \cite{Hor2019} is quite different from that used here.
\end{remark}

Let $X$ be a topological biquandle and $K$ an oriented knot which is realized as the closure of an $n$-braid $\beta$. Now $\beta$ induces a continuous map $f_{\beta}: X^n\to X^n$. Denote $J_X(K)\subset X^n$ to be the set of fixed points of $f_{\beta}$. It is not difficult to observe that $J_X(K)$ is nothing but the set of all $X$-colorings equipped with the compact-open topology. The following result follows directly from the definition of topological biquandle, while a similar result for topological quandle was first given in \cite[Theorem 4.1]{Rub2007}.

\begin{theorem}
For any topological biquandle $X$, the topological space $J_X(K)$ is a knot invariant.
\end{theorem}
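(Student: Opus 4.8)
The plan is to invoke Markov's theorem, which says that two braids $\beta\in B_n$ and $\beta'\in B_m$ have isotopic closures as oriented links in $S^3$ if and only if $\beta'$ is obtained from $\beta$ by a finite sequence of moves of two types: (M1) conjugation $\beta\mapsto\gamma\beta\gamma^{-1}$ inside a fixed $B_n$, and (M2) stabilization $\beta\mapsto\beta\sigma_n^{\pm1}\in B_{n+1}$ together with its inverse, destabilization. Since the assertion is that $J_X(K)$ is well defined up to homeomorphism, it suffices to produce, for each of these moves, a homeomorphism between the corresponding fixed-point spaces; the argument parallels the quandle case in \cite[Theorem~4.1]{Rub2007}.

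First I would pin down the assignment $\beta\mapsto f_\beta$. For the standard generator $\sigma_i\in B_n$, let $f_{\sigma_i}\colon X^n\to X^n$ act as $r$ on the $i$-th and $(i+1)$-st coordinates and as the identity elsewhere, and let $f_{\sigma_i^{-1}}$ act as $r^{-1}$ on those coordinates (here one uses that $r$, and below $S$ and $\tau$, are homeomorphisms, so their inverses are again continuous). Far commutativity of the $f_{\sigma_i}$ for $|i-j|\ge 2$ is clear, and the braid relation $\sigma_i\sigma_{i+1}\sigma_i=\sigma_{i+1}\sigma_i\sigma_{i+1}$ is matched precisely by the set-theoretic Yang--Baxter equation of axiom~(1). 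Hence $\beta\mapsto f_\beta$ is a well-defined homomorphism from $B_n$ to the group of self-homeomorphisms of $X^n$, and in particular each $f_\beta$ is a homeomorphism. Then $J_X(\widehat\beta)=\operatorname{Fix}(f_\beta)\subseteq X^n$ carries the subspace topology, which agrees with the compact-open topology on the space of $X$-colorings as already observed.

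Invariance under (M1) is formal: if $\beta'=\gamma\beta\gamma^{-1}$ then $f_{\beta'}=f_\gamma\circ f_\beta\circ f_\gamma^{-1}$, so the homeomorphism $f_\gamma$ of $X^n$ restricts to a homeomorphism $\operatorname{Fix}(f_\beta)\xrightarrow{\sim}\operatorname{Fix}(f_{\beta'})$. For (M2) with $\beta\mapsto\beta\sigma_n$ I would construct an explicit map $\Phi\colon\operatorname{Fix}(f_\beta)\to\operatorname{Fix}(f_{\beta\sigma_n})\subseteq X^{n+1}$ by tracking how a coloring of $\widehat\beta$ propagates around the single extra strand created by $\sigma_n$: the color of the new strand, the modification of the old colors at the one new crossing, and the matching condition at the loop closing that strand up are all prescribed by the maps $\star a$, $\ast a$, the sideways map $S$, and the diagonal map $\tau$. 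Axiom~(2) (bijectivity of $\star a$, $\ast a$ and of $S$) guarantees that such an extension exists and is uniquely determined by the original coloring, and axiom~(3) (bijectivity of $\tau$) is exactly what makes the closing loop consistent; the case $\beta\mapsto\beta\sigma_n^{-1}$ is handled identically using $r^{-1}$, $S^{-1}$, $\tau^{-1}$. Since $\Phi$ and $\Phi^{-1}$ are assembled from the continuous maps $r^{\pm1}$, $\star a$, $\ast a$, $S^{\pm1}$, $\tau^{\pm1}$, both are continuous, so $\Phi$ is a homeomorphism $J_X(\widehat\beta)\cong J_X(\widehat{\beta\sigma_n})$. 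Combining the two cases with Markov's theorem shows that the homeomorphism type of $J_X(K)$ is independent of the chosen braid presentation of $K$.

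I expect the stabilization step to be the only real work: one must set up the bookkeeping for how a coloring of the closed braid $\widehat\beta$ extends across the added strand and check, at the new crossing and at the closing loop, that biquandle axioms~(2) and~(3) force a unique compatible extension — this is precisely where the biquandle axioms beyond the Yang--Baxter equation enter. Once that is done, continuity of $\Phi$ and $\Phi^{-1}$ is automatic from the continuity of the biquandle operations, which is the whole reason for working with a \emph{topological} biquandle, and conjugation invariance is immediate.
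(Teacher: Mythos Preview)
Your approach via Markov's theorem is correct and is the standard one. The paper itself does not give a proof: it simply states that the result ``follows directly from the definition of topological biquandle'' and points to the analogous quandle statement \cite[Theorem~4.1]{Rub2007}. Your outline is exactly Rubinsztein's argument transported to the biquandle setting, with biquandle axiom~(3) playing the role of the quandle idempotency axiom in the stabilization step, so there is essentially nothing to compare.

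One caveat worth making explicit when you write this up: you assert that $r$, $S$, and $\tau$ are homeomorphisms so that $f_\gamma^{-1}$ and $\Phi^{-1}$ are continuous, but the paper's Definition of topological biquandle literally only requires $\ast$ and $\star$ to be continuous. This yields continuity of $r$, $S$, $\tau$ but not, a priori, of their inverses, and you genuinely need $f_\gamma^{-1}$ continuous for (M1) and $\Phi^{-1}$ continuous for destabilization. In every example the paper actually constructs this is automatic (the operations are polynomial or smooth on a manifold), and most treatments of topological quandles build the homeomorphism condition into the definition; just be aware that you are tacitly using a slightly stronger hypothesis than the one written down in the paper.
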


\subsection{Skew brace}
Recall that a set-theoretic solution of the Yang-Baxter equation is a set $X$ with a bijective map $r: X\times X\to X\times X$ such that 
\begin{center}
$(\text{Id}\times r)\circ(r\times\text{Id})\circ(\text{Id}\times r)=(r\times\text{Id})\circ(\text{Id}\times r)\circ(r\times\text{Id})$.
\end{center} 
As before, let us denote $r(a, b)=(b\star a, a\ast b)$ for any $(a, b)\in X\times X$. If for any $a\in X$, the two maps $\ast a, \star a: X\to X$ are both bijective, then we say the solution $(X, r)$ is \emph{non-degenerate}. If $r^2=\text{id}$, then we say $(X, r)$ is \emph{involutive}. Notice that involutive solutions can only provide trivial knot invariants, since any knot invariant derived from an involutive solution is preserved under crossing change. However, it is still possible to use involutive solutions to define some non-trivial invariants for virtual knots \cite{CN2024}.

Braces were introduced by Rump in \cite{Rum2007} to study involutive, non-degenerate set-theoretic solutions of the Yang-Baxter equation. The notion of skew brace, which can be considered as a non-abelian version of brace, was introduced by Guarnieri and Vendramin in \cite{GV2017}.

\begin{definition}
A \emph{skew brace} is a triple $(A, +, \circ)$, where both $(A, +)$ and $(A, \circ)$ are groups, and for any $a, b, c\in A$ we have $a\circ(b+c)=a\circ b-a+a\circ c$.
\end{definition}

Usually, we call the group $(A, +)$ the \emph{additive group} of $A$ and call the group $(A, \circ)$ the \emph{multiplicative group} of $A$. Note that both of them are not necessary abelian groups. It is easy to find that the identity element of the additive group and that of the multiplicative group coincide, which is simply denoted by 0. For any $a\in A$, let us use $-a$ and $a'$ to denote the inverse of $a$ with respect to $+$ and $\circ$, respectively.

\begin{example}\label{example2.11}
Here we list some examples of skew braces.
\begin{itemize}
\item Any group $(A, +)$ equipped with the multiplication $a\circ b=a+b$ $(\forall a, b\in A)$ is a skew brace, which is called a \emph{trivial skew brace}.
\item Let $(R, +, \cdot)$ be a radical ring, i.e. $(R, \circ)$ is a group, here the Jacobson circle operation $\circ: R\times R\to R$ is defined as $a\circ b=a+a\cdot b+b$. Then $(R, +, \circ)$ is a skew brace.
\item Let $A$ and $X$ be two groups and $h: A\to \text{Aut}(X)$ be a group homomorphism. We introduce two binary operations on $X\times A$ as follows
\begin{center}
$(x, a)+(y, b)=(xy, ab)$ and $(x, a)\circ(y, b)=(xh_a(y), ab)$,
\end{center}
here $a, b\in A$, $h_a\in\text{Aut}(X)$ and $x, y\in X$. Then $(X\times A, +, \circ)$ is a skew brace, which is trivial if and only if $h$ is trivial.
\end{itemize}
\end{example}

One feature of skew braces is that they can be used to construct non-degenerate set-theoretic solutions of the Yang-Baxter equation. The following result was essentially proved in \cite[Theorem 3.1]{GV2017}, see also \cite[Theorem 4.1]{SV2018} or \cite[Theorem 2]{CN2024}.

\begin{theorem}\label{theorem2.12}
Let $(A, +, \circ)$ be a skew brace, then $(A, \ast, \star)$ is a biquandle, where $a\ast b=(-a+a\circ b)'\circ a\circ b$ and $b\star a=-a+a\circ b$.
\end{theorem}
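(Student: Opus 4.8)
The plan is to verify the three biquandle axioms directly from the skew brace structure, using the well-known connection between skew braces and set-theoretic solutions of the Yang--Baxter equation. First I would recall that for a skew brace $(A,+,\circ)$ the map $\lambda_a(b) := -a + a\circ b$ defines a homomorphism $\lambda\colon (A,\circ)\to\operatorname{Aut}(A,+)$, and the map
\[
r(a,b) = \bigl(\lambda_a(b),\ \lambda_{\lambda_a(b)}'(a)\circ b\bigr)
\]
is a non-degenerate solution of the Yang--Baxter equation; this is exactly \cite[Theorem 3.1]{GV2017}. Setting $b\star a = \lambda_a(b) = -a + a\circ b$ and $a\ast b = \lambda_{\lambda_a(b)}'(a)\circ b = (-a+a\circ b)'\circ a\circ b$ as in the statement, axiom (1) — that $r$ satisfies the set-theoretic Yang--Baxter equation — is then immediate from that cited theorem, so the real content is checking axioms (2) and (3).

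For axiom (2), I would first observe that $\star a\colon X\to X$ sends $b\mapsto -a + a\circ b$, which is the composite of left multiplication by $a$ in $(A,\circ)$ followed by left translation by $-a$ in $(A,+)$; both are bijections, so $\star a$ is a bijection with explicit inverse. For $\ast a$, i.e. the map $b\mapsto a\ast b$, I would exhibit its inverse explicitly (or argue via non-degeneracy of $r$, since non-degeneracy of the induced solution is part of the GV construction). Then I must construct the bijection $S$ with $S(b\star a, a) = (a\ast b, b)$. The natural approach: given a pair $(u,a)$ in the image, one recovers $b$ by $b = (\star a)^{-1}(u)$, which is single-valued and continuous, and then sets $S(u,a) = (a\ast b,\, b)$; since $(b,a)\mapsto(b\star a, a)$ is a bijection of $X\times X$ (its first coordinate is invertible in $b$ for fixed $a$, and the second coordinate is unchanged), $S$ is well-defined and bijective on all of $X\times X$. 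I would write out the formula $S(u,a) = \bigl((-a+a\circ b)'\circ a\circ b,\ b\bigr)$ with $b = a'\circ(a+u)$ and simplify to a closed form purely in terms of $u$ and $a$; this simplification, using the skew brace identity $a\circ(b+c) = a\circ b - a + a\circ c$ repeatedly, is the step I expect to be the main obstacle — it is elementary but the algebra in a possibly non-abelian $(A,+)$ requires care with the order of terms.

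For axiom (3), I would restrict $S$ to the diagonal: compute $S(a,a)$. Here $u = a$, so $b = a'\circ(a+a)$, and then $\tau(a) := $ the common value of the two coordinates of $S(a,a)$ must be shown to be a well-defined map $A\to A$ with equal coordinates. The cleanest route is to note that $S(a,a) = (a\ast b, b)$ where $b\star a = a$, i.e. $-a + a\circ b = a$, forcing $a\circ b = a + a = 2a$ (additively) hence $b = a'\circ(2a)$, and then to check $a\ast b = b$, which reduces to verifying $(-a + a\circ b)'\circ a\circ b = b$ when $-a+a\circ b = a$; substituting gives $a'\circ a\circ b = b$, trivially true. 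Thus $\tau(a) = a'\circ(a+a)$ and $S(a,a) = (\tau(a),\tau(a))$, and $\tau$ is manifestly a continuous bijection (its inverse is readily written down). Finally, since addition, the circle operation, additive inversion, and circle inversion are all continuous in a topological skew brace — which I would invoke as the relevant hypothesis when the examples in Section~\ref{section3} are treated — every map above is continuous, so $(A,\ast,\star)$ is a (topological) biquandle; but for the purely algebraic Theorem~\ref{theorem2.12} as stated, continuity plays no role and only the three axioms need the verification sketched here.
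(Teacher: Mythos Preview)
Your approach is essentially the paper's: cite \cite{GV2017} for axiom~(1), verify bijectivity for axiom~(2), and compute $\tau$ for axiom~(3). Two points deserve correction.

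First, you misidentify $\ast a$. In the paper's conventions $\ast a$ is the map $y\mapsto y\ast a$, not $b\mapsto a\ast b$. This is not cosmetic: your argument that ``$(b,a)\mapsto(b\star a,a)$ is a bijection'' establishes only that $S$ is \emph{well-defined}; for $S$ to be a \emph{bijection} you also need $(b,a)\mapsto(a\ast b,b)$ to be a bijection, i.e.\ precisely that $y\mapsto y\ast a$ is invertible for each $a$. The paper does this explicitly: from $y\ast a=b$ one gets $(-y+y\circ a)\circ b=y\circ a$, hence $y'\circ(-y+y\circ a)=a\circ b'$, and using $x\circ(-u+v)=x-x\circ u+x\circ v$ this yields $y'+a=a\circ b'$, so $b\ast^{-1}a=(a\circ b'-a)'$. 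Your fallback of invoking non-degeneracy from \cite{GV2017} would also suffice, since non-degeneracy there is exactly the bijectivity of $b\mapsto b\star a$ and $y\mapsto y\ast a$.

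Second, your formula $\tau(a)=a'\circ(a+a)$ is correct but unsimplified. One application of the skew brace identity gives
\[
a'\circ(a+a)=a'\circ a - a' + a'\circ a = 0 - a' + 0 = -a',
\]
which is the closed form the paper records. Your verification that the two coordinates of $S(a,a)$ agree (via $a'\circ a\circ b=b$) is fine and matches the paper's computation.
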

\begin{proof}
We sketch the proof here.
\begin{enumerate}
\item The fact that the map $r(a, b)=(b\star a, a\ast b)$ satisfies the Yang-Baxter equation was proved in \cite[Theorem 3.1]{GV2017}. 
\item Assume $x\star a=b$, then $b=-a+a\circ x$, which follows that $b\star^{-1}a=x=a'\circ(a+b)$. On the other hand, if $y\ast a=b$, then $b=(-y+y\circ a)'\circ y\circ a$. It follows that $(-y+y\circ a)\circ b=y\circ a$, which implies that $y'\circ(-y+y\circ a)=a\circ b'$. Recall the formula $a\circ(-b+c)=a-a\circ b+a\circ c$, then we have $y'+a=a\circ b'$, which follows that $b\ast^{-1}a=y=(a\circ b'-a)'$. As a consequence, $\star a: A\to A$ and $\ast a: A\to A$ are both bijective.
\item The map $S: A\times A\to A\times A$ is given by $S(-a+a\circ b, a)=((-a+a\circ b)'\circ a\circ b, b)$. By using the formula $a\circ(b-c)=a\circ b-a\circ c+a$, it is not difficult to verify that $S(a, a)=(\tau(a), \tau(a))$, where $\tau(a)=-a'$.
\end{enumerate}
\end{proof}

\begin{remark}
If the biquandle $(A, \ast, \star)$ obtained from a skew brace $(A, +, \circ)$ is actually a quandle, then this skew brace is trivial and the quandle is the conjugation quandle of this group.
\end{remark}

\section{Some examples of topological biquandles}\label{section3}
\subsection{Biquandle structures on $\mathbb{R}^3$}
Consider the three dimensional Euclidean space $\mathbb{R}^3$, for any two points $(x_1, y_1, z_1)$ and $(x_2, y_2, z_2)$ in $\mathbb{R}^3$, we introduce the following two binary operations
\begin{center}
$(x_1, y_1, z_1)+(x_2, y_2, z_2)=(x_1+x_2, y_1+y_2, z_1+z_2)$,
\end{center}
and
\begin{center}
$(x_1, y_1, z_1)\circ(x_2, y_2, z_2)=(x_1+x_2, y_1+y_2, z_1+z_2+x_1y_2)$.
\end{center}
It is easy to find that both $(\mathbb{R}^3, +)$ and $(\mathbb{R}^3, \circ)$ are groups, where the former one is the ordinary additive group $\mathbb{R}^3$ and the latter one is actually the Heisenberg group $\mathbb{H}^1$. Note that now we have $(x_1, y_1, z_1)'=(-x_1, -y_1, x_1y_1-z_1)$.

\begin{theorem}\label{theorem3.1}
The triple $(\mathbb{R}^3, +, \circ)$ and the triple $(\mathbb{R}^3, \circ, +)$ are both skew braces.
\end{theorem}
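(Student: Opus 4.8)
The plan is to verify the two skew brace axioms directly for each triple: we must confirm that $(\mathbb{R}^3,+)$ and $(\mathbb{R}^3,\circ)$ are both groups, and then check the compatibility identity $a\circ(b+c)=a\circ b-a+a\circ c$. The first claim is essentially free: $(\mathbb{R}^3,+)$ is the standard abelian group, and $(\mathbb{R}^3,\circ)$ is recognized as the Heisenberg group $\mathbb{H}^1$ under the familiar coordinates, with identity $0=(0,0,0)$ and inverse $(x_1,y_1,z_1)'=(-x_1,-y_1,x_1y_1-z_1)$ as already recorded in the text; one can also check associativity of $\circ$ by a short direct computation. So the real content is the distributive-type axiom, and the main (though still routine) obstacle is just keeping the bilinear cross-term $x_1y_2$ bookkeeping straight.

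For the triple $(\mathbb{R}^3,+,\circ)$, I would write $a=(x_1,y_1,z_1)$, $b=(x_2,y_2,z_2)$, $c=(x_3,y_3,z_3)$ and compute both sides of
\begin{equation*}
a\circ(b+c)=a\circ b-a+a\circ c.
\end{equation*}
The left side is $a\circ(x_2+x_3,y_2+y_3,z_2+z_3)=(x_1+x_2+x_3,\ y_1+y_2+y_3,\ z_1+z_2+z_3+x_1(y_2+y_3))$. For the right side, $a\circ b=(x_1+x_2,y_1+y_2,z_1+z_2+x_1y_2)$, subtracting $a$ (in the additive group) gives $(x_2,y_2,z_2+x_1y_2)$, and adding $a\circ c=(x_1+x_3,y_1+y_3,z_1+z_3+x_1y_3)$ yields $(x_1+x_2+x_3,\ y_1+y_2+y_3,\ z_1+z_2+z_3+x_1y_2+x_1y_3)$. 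The two agree, so the axiom holds.

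For the triple $(\mathbb{R}^3,\circ,+)$, the roles are swapped: the additive group is now $(\mathbb{R}^3,\circ)=\mathbb{H}^1$ and the multiplicative group is $(\mathbb{R}^3,+)$, so the identity to verify becomes $a+(b\circ c)=(a+b)\circ'(a)\circ(a+c)$, where here ``$-a$'' means the $\circ$-inverse $a'$ and the outer operations are $\circ$; concretely $a+(b\circ c)=(a+b)\circ a'\circ (a+c)$. I would again substitute coordinates: $b\circ c=(x_2+x_3,y_2+y_3,z_2+z_3+x_2y_3)$, so $a+(b\circ c)=(x_1+x_2+x_3,\ y_1+y_2+y_3,\ z_1+z_2+z_3+x_2y_3)$. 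On the other side, $a+b=(x_1+x_2,y_1+y_2,z_1+z_2)$ and $a+c=(x_1+x_3,y_1+y_3,z_1+z_3)$; composing $(a+b)\circ a'$ using $a'=(-x_1,-y_1,x_1y_1-z_1)$ gives first coordinate $x_2$, second coordinate $y_2$, third coordinate $(z_1+z_2)+(x_1y_1-z_1)+(x_1+x_2)(-y_1)=z_2-x_2y_1$; then composing with $a+c$ gives third coordinate $(z_2-x_2y_1)+(z_1+z_3)+x_2(y_1+y_3)=z_1+z_2+z_3+x_2y_3$, matching the left side. Hence $(\mathbb{R}^3,\circ,+)$ is a skew brace as well. (Both computations are short enough that I would present them compactly rather than belaboring each coordinate; the only place an error could creep in is the sign of the cross-term in $a'$, which is why I would double-check that $a\circ a'=0$ before using it.)
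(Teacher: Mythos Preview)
Your proof is correct and follows essentially the same approach as the paper: direct coordinate verification of the skew brace identity $a\circ(b+c)=a\circ b-a+a\circ c$ for $(\mathbb{R}^3,+,\circ)$ and of $a+(b\circ c)=(a+b)\circ a'\circ(a+c)$ for $(\mathbb{R}^3,\circ,+)$, with the same intermediate computations (in particular $(a+b)\circ a'=(x_2,y_2,z_2-x_2y_1)$). Aside from the brief notational slip ``$\circ'(a)$'' that you immediately correct, there is nothing to add.
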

\begin{proof}
Let us choose three points $a_i=(x_i, y_i, z_i)\in\mathbb{R}^3$ $(1\leq i\leq 3)$. 

First, we show $(\mathbb{R}^3, +, \circ)$ is a skew brace. One computes
\begin{flalign*}
a_1\circ(a_2+a_3)&=(x_1, y_1, z_1)\circ(x_2+x_3, y_2+y_3, z_2+z_3)\\
&=(\sum\limits_{i=1}^3x_i, \sum\limits_{i=1}^3y_i, \sum\limits_{i=1}^3z_i+x_1(y_2+y_3)).
\end{flalign*}
On the other hand,
\begin{flalign*}
a_1\circ a_2-a_1+a_1\circ a_3=&(\sum\limits_{i=1}^2x_i, \sum\limits_{i=1}^2y_i, \sum\limits_{i=1}^2z_i+x_1y_2)-(x_1, y_1, z_1)\\
&+(x_1+x_3, y_1+y_3, z_1+z_3+x_1y_3)\\
=&(\sum\limits_{i=1}^3x_i, \sum\limits_{i=1}^3y_i, \sum\limits_{i=1}^3z_i+x_1y_2+x_1y_3).
\end{flalign*}
We have $a_1\circ(a_2+a_3)=a_1\circ a_2-a_1+a_1\circ a_3$, hence $(\mathbb{R}^3, +, \circ)$ is a skew brace.

Second, we show $(\mathbb{R}^3, \circ, +)$ is also a skew brace. One computes
\begin{flalign*}
a_1+(a_2\circ a_3)=&(x_1, y_1, z_1)+(x_2+x_3, y_2+y_3, z_2+z_3+x_2y_3)\\
=&(\sum\limits_{i=1}^3x_i, \sum\limits_{i=1}^3y_i, \sum\limits_{i=1}^3z_i+x_2y_3),
\end{flalign*}
and
\begin{flalign*}
(a_1+a_2)\circ (a_1)'\circ(a_1+a_3)=&(x_1+x_2, y_1+y_2, z_1+z_2)\circ(-x_1, -y_1, x_1y_1-z_1)\\
&\circ(x_1+x_3, y_1+y_3, z_1+z_3)\\
=&(x_2, y_2, z_2-x_2y_1)\circ(x_1+x_3, y_1+y_3, z_1+z_3)\\
=&(\sum\limits_{i=1}^3x_i, \sum\limits_{i=1}^3y_i, \sum\limits_{i=1}^3z_i+x_2y_3).
\end{flalign*}
It follows that $a_1+(a_2\circ a_3)=(a_1+a_2)\circ (a_1)'\circ(a_1+a_3)$, therefore $(\mathbb{R}^3, \circ, +)$ is a skew brace.
\end{proof}

Combine Theorem \ref{theorem2.12} and Theorem \ref{theorem3.1} together, we obtain the following corollary.

\begin{corollary}\label{corollary3.2}
Both $(\mathbb{R}^3, r_1)$ and $(\mathbb{R}^3, r_2)$ are topological biquandles, where 
\begin{center}
$r_1((x_1, y_1, z_1), (x_2, y_2, z_2))=((x_2, y_2, z_2+x_1y_2), (x_1,y_1,z_1-x_2y_1))$
\end{center}
and 
\begin{center}
$r_2((x_1, y_1, z_1), (x_2, y_2, z_2))=((x_2, y_2, z_2-x_1y_2), (x_1, y_1, z_1+x_1y_2))$.
\end{center}
\end{corollary}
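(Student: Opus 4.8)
The plan is to deduce Corollary \ref{corollary3.2} directly from the two results it cites, Theorem \ref{theorem2.12} and Theorem \ref{theorem3.1}, by making the abstract formulas concrete. By Theorem \ref{theorem3.1}, both $(\mathbb{R}^3,+,\circ)$ and $(\mathbb{R}^3,\circ,+)$ are skew braces; by Theorem \ref{theorem2.12}, every skew brace $(A,\oplus,\odot)$ yields a biquandle with $b\star a = \ominus a \oplus (a\odot b)$ and $a\ast b = (\ominus a\oplus(a\odot b))^{\dagger}\odot a\odot b$, where $\ominus$ and $\dagger$ denote the inverses in the first and second group operation respectively. So the entire content of the corollary is (i) substituting the explicit operations of the two skew braces into these formulas to check that the resulting $r_1,r_2$ are exactly the maps displayed in the statement, and (ii) observing that these maps are continuous, hence the biquandles are topological.

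The first step I would carry out is the computation for $(\mathbb{R}^3,+,\circ)$, which gives $r_1$. Here $\oplus=+$, $\odot=\circ$, so $\ominus a = -a$ and $a^\dagger = a'$ with $a'=(x_1,y_1,z_1)'=(-x_1,-y_1,x_1y_1-z_1)$ as already recorded before Theorem \ref{theorem3.1}. I would compute $a\circ b = (x_1+x_2,\,y_1+y_2,\,z_1+z_2+x_1y_2)$, then $-a+(a\circ b) = (x_2,\,y_2,\,z_2+x_1y_2)$, which is exactly the first coordinate $b\star a$ of $r_1$. For the second coordinate, I would compute $(b\star a)' \circ a\circ b$; since $b\star a = (x_2,y_2,z_2+x_1y_2)$ one gets $(b\star a)' = (-x_2,-y_2,x_2y_2-z_2-x_1y_2)$, and multiplying by $a\circ b$ on the right a short Heisenberg calculation yields $(x_1,y_1,z_1-x_2y_1)$, matching $r_1$. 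Then I would repeat the same routine for the skew brace $(\mathbb{R}^3,\circ,+)$: now $\oplus=\circ$ (so $\ominus a = a'$) and $\odot=+$ (so $a^\dagger=-a$), and the same substitution, using the formula $a'\circ(a+b)$ for the ``subtraction'' in the $\circ$-group, should produce $r_2$ as displayed. I expect the bookkeeping of which inverse is which to be the only place an error could creep in, so I would state explicitly at the outset the dictionary $(\oplus,\odot)\leftrightarrow(+,\circ)$ versus $(\circ,+)$.

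Finally I would note that each coordinate of $r_1$ and $r_2$ is a polynomial in $x_1,y_1,z_1,x_2,y_2,z_2$, hence $r_1,r_2:\mathbb{R}^3\times\mathbb{R}^3\to\mathbb{R}^3\times\mathbb{R}^3$ are continuous (indeed smooth); equivalently the two operations $\ast,\star$ are continuous, so by the definition of topological biquandle the corollary follows. I would also remark, as a sanity check on non-triviality in the sense of the introduction, that the topology on $\mathbb{R}^3$ is not discrete and that $r_1,r_2$ are genuine biquandles and not quandles (the $\star$ operation depends on the second argument through the $x_2y_1$ and $x_1y_2$ terms), so these are bona fide nontrivial examples. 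The only real ``obstacle'' is purely mechanical: carrying out the non-commutative products in $\mathbb{H}^1$ without sign slips; there is no conceptual difficulty once Theorems \ref{theorem2.12} and \ref{theorem3.1} are in hand.
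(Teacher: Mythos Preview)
Your proposal is correct and follows exactly the approach the paper intends: the paper gives no explicit proof beyond the sentence ``Combine Theorem \ref{theorem2.12} and Theorem \ref{theorem3.1} together, we obtain the following corollary,'' and your write-up simply fills in the omitted substitutions and the continuity remark. Your computations for $r_1$ and $r_2$ check out line by line, so there is nothing to add.
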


\begin{remark}
Let $(A, +, \circ)$ be a skew brace, then the solution $(A, \ast, \star)$ is involutive if and only if the additive group $(A, +)$ is abelian \cite{GV2017}. As a consequence, $(\mathbb{R}^3, r_1)$ is involutive and $(\mathbb{R}^3, r_2)$ is non-involutive.
\end{remark}

\begin{example}
Let $L=K_1\cup\cdots\cup K_n$ be an oriented $n$-component link and $X$ be the topological biquandle $(\mathbb{R}^3, r_2)$. Choose a link diagram of $L$ and two components $K_i$ and $K_j$. Let us use $C_{ij}\cup C_{ji}$ to denote the set of crossing points of $K_i$ with $K_j$, where for each crossing in $C_{ij}$ the lower strand belongs to $K_i$ and for each crossing in $C_{ji}$ the lower strand belongs to $K_j$. Now we define 
\begin{center}
$c_{ij}=\sum\limits_{c\in C_{ij}}w(c)$ and $c_{ji}=\sum\limits_{c\in C_{ji}}w(c)$,
\end{center}
where $w(c)$ denotes the sign of $c$. For simplicity, we set $c_{ii}=0$ for any $1\leq i\leq n$. Obviously, when $i\neq j$ we have $c_{ij}+c_{ji}=2lk(K_i, K_j)$. Now we assign a color $(x_i, y_i, z_i)\in\mathbb{R}^3$ to a point of $K_i$, then walk along $K_i$ according to the orientation. When we come back to the starting point, now the color is turned into $(x_i, y_i, z_i+\sum\limits_{j=1}^n(c_{ij}x_iy_j-c_{ji}x_jy_i))$. It turns out the coloring space 
\begin{center}
$J_X(L)=\{(x_1, y_1, z_1, \cdots, x_n, y_n, z_n)\in\mathbb{R}^{3n}|\sum\limits_{j=1}^nc_{ij}x_iy_j=\sum\limits_{j=1}^nc_{ji}x_jy_i, 1\leq i\leq n\}$.
\end{center}
Note that the equation corresponding to $K_n$ can be obtained from the rest $n-1$ equations, hence the equation $\sum\limits_{j=1}^nc_{nj}x_ny_j=\sum\limits_{j=1}^nc_{jn}x_jy_n$ can be removed.
\end{example}

\begin{remark}
Corollary \ref{corollary3.2} can be extended by using Heisenberg group $\mathbb{H}^n$. Recall that $\mathbb{H}^n$, as a topological space, is equal to $\mathbb{R}^n\times\mathbb{R}^n\times\mathbb{R}$. The binary operation $\circ: \mathbb{H}^n\times\mathbb{H}^n\to\mathbb{H}^n$ is defined as
\begin{center}
$(\alpha_1, \beta_1, z_1)\circ(\alpha_2, \beta_2, z_2)=(\alpha_1+\alpha_2, \beta_1+\beta_2, z_1+z_2+\alpha_1\cdot\beta_2)$,
\end{center}
where $(\alpha_i, \beta_i, z_i)\in\mathbb{R}^n\times\mathbb{R}^n\times\mathbb{R}$ $(i=1, 2)$ and $\alpha_1\cdot\beta_2$ denotes the dot product of $\alpha_1$ and $\beta_2$. As an analogy, we leave it to the reader to verify that both the two triples $(\mathbb{R}^{2n+1}, +, \circ)$ and $(\mathbb{R}^{2n+1}, \circ, +)$ are skew braces. As a consequence, we obtain two different topological biquandle structures on $\mathbb{R}^{2n+1}$.
\end{remark}

\subsection{Biquandle structures on $S^1\times\mathbb{R}^2$}
Consider the topological space $S^1\times\mathbb{R}^2$ equipped with the product topology. Choose two points $(e^{i\theta_1}, x_1, y_1)$ and $(e^{i\theta_2}, x_2, y_2)$ in $S^1\times\mathbb{R}^2$, the following two binary operations define two group structures on $S^1\times\mathbb{R}^2$:
\begin{center}
$(e^{i\theta_1}, x_1, y_1)+(e^{i\theta_2}, x_2, y_2)=(e^{i(\theta_1+\theta_2)}, x_1+x_2, y_1+y_2)$,
\end{center}
and
\begin{center}
$(e^{i\theta_1}, x_1, y_1)\circ(e^{i\theta_2}, x_2, y_2)=(e^{i(\theta_1+\theta_2)}, x_2\cos\theta_1-y_2\sin\theta_1+x_1, x_2\sin\theta_1+y_2\cos\theta_1+y_1)$.
\end{center}
The reader may have found that group $(S^1\times\mathbb{R}^2, \circ)$ is nothing but the Lie group $\text{SL}(2, \mathbb{R})$. In particular, we have $(e^{i\theta_1}, x_1, y_1)'=(e^{-i\theta_1}, -x_1\cos\theta_1-y_1\sin\theta_1, x_1\sin\theta_1-y_1\cos\theta_1)$.

\begin{theorem}\label{theorem3.6}
The triple $(S^1\times\mathbb{R}^2, +, \circ)$ and the triple $(S^1\times\mathbb{R}^2, \circ, +)$ are both skew braces.
\end{theorem}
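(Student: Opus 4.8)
The plan is to mirror the proof of Theorem~\ref{theorem3.1}. The group axioms require no new work: $(S^1\times\mathbb{R}^2,+)$ is a direct product of abelian groups, and, writing $v=(x,y)\in\mathbb{R}^2$ and letting $R_\theta$ denote the rotation matrix with entries $\cos\theta,\sin\theta$, the assignment $(e^{i\theta},v)\mapsto\bigl(w\mapsto R_\theta w+v\bigr)$ identifies $(S^1\times\mathbb{R}^2,\circ)$ with a group of affine motions of $\mathbb{R}^2$ under composition; associativity, the identity $(1,0,0)$, and the inverse formula for $(e^{i\theta_1},x_1,y_1)'$ recorded just before the theorem all follow at once from this description. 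So it remains only to verify the one distributive identity in each case, and I would do this with three generic points $a_i=(e^{i\theta_i},v_i)$, $1\le i\le 3$.

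For $(S^1\times\mathbb{R}^2,+,\circ)$ the identity to check is $a_1\circ(a_2+a_3)=a_1\circ a_2-a_1+a_1\circ a_3$. The $S^1$-coordinate is $e^{i(\theta_1+\theta_2+\theta_3)}$ on both sides, so only the $\mathbb{R}^2$-part matters. On the left it equals $R_{\theta_1}(v_2+v_3)+v_1$, and on the right it equals $(R_{\theta_1}v_2+v_1)-v_1+(R_{\theta_1}v_3+v_1)=R_{\theta_1}v_2+R_{\theta_1}v_3+v_1$; the two agree by linearity of $R_{\theta_1}$. Writing this out in $\cos\theta_1,\sin\theta_1$ coordinates reproduces the stated identity, so $(S^1\times\mathbb{R}^2,+,\circ)$ is a skew brace.

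For $(S^1\times\mathbb{R}^2,\circ,+)$ the additive group is $(S^1\times\mathbb{R}^2,\circ)$ and the multiplicative group is $(S^1\times\mathbb{R}^2,+)$, so — exactly as in the second half of the proof of Theorem~\ref{theorem3.1} — one must check $a_1+(a_2\circ a_3)=(a_1+a_2)\circ a_1'\circ(a_1+a_3)$. Here the affine picture does the bookkeeping: reading the composition from right to left, $a_1+a_3$ is the map $w\mapsto R_{\theta_1+\theta_3}w+(v_1+v_3)$, then $a_1'$ is $u\mapsto R_{-\theta_1}(u-v_1)$, and then $a_1+a_2$ is $s\mapsto R_{\theta_1+\theta_2}s+(v_1+v_2)$; composing the three, the $v_1$-terms telescope and one is left with $w\mapsto R_{\theta_1+\theta_2+\theta_3}w+(v_1+v_2+R_{\theta_2}v_3)$, which is precisely the affine motion attached to $(e^{i(\theta_1+\theta_2+\theta_3)},\,v_1+v_2+R_{\theta_2}v_3)=a_1+(a_2\circ a_3)$. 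Expanding once more into the $\cos,\sin$ entries gives the asserted identity, and combining with Theorem~\ref{theorem2.12} completes the discussion.

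The computations are entirely routine; the only genuine nuisance is tracking the rotation matrices once everything is pushed down to $(\cos,\sin)$-coordinates, and the one step worth isolating is the cancellation of the $v_1$-contributions in the second identity. The affine-motion reformulation of $(S^1\times\mathbb{R}^2,\circ)$ makes that cancellation transparent and is, in my view, the cleanest way to organize the whole verification.
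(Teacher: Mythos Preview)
Your argument is correct and follows the same strategy as the paper's proof: a direct verification of the two skew-brace distributivity identities for three generic points. The only difference is packaging: the paper expands everything in $\cos\theta,\sin\theta$ coordinates throughout, whereas you work with the rotation matrices $R_\theta$ and the affine-motion identification $(e^{i\theta},v)\mapsto(w\mapsto R_\theta w+v)$, which makes the linearity in the first identity and the telescoping of the $v_1$-terms in the second identity transparent without the long trigonometric display. One small slip: the closing remark ``combining with Theorem~\ref{theorem2.12} completes the discussion'' belongs to Corollary~\ref{corollary3.7}, not to the present theorem, whose statement is purely about the skew-brace axioms.
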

\begin{proof}
We choose three points $a_k=(e^{i\theta_k}, x_k, y_k)\in S^1\times\mathbb{R}^2$, here $k\in\{1, 2, 3\}$.

First, we prove that $(S^1\times\mathbb{R}^2, +, \circ)$ is a skew brace. On one hand, we have
\begin{flalign*}
&a_1\circ(a_2+a_3)\\
=&(e^{i\theta_1}, x_1, y_1)\circ(e^{i(\theta_2+\theta_3)}, x_2+x_3, y_2+y_3)\\
=&(e^{i\sum\limits_{k=1}^3\theta_k}, (x_2+x_3)\cos\theta_1-(y_2+y_3)\sin\theta_1+x_1, (x_2+x_3)\sin\theta_1+(y_2+y_3)\cos\theta_1+y_1).
\end{flalign*}
On the other hand,
\begin{flalign*}
&a_1\circ a_2-a_1+a_1\circ a_3\\
=&(e^{i(\theta_1+\theta_2)}, x_2\cos\theta_1-y_2\sin\theta_1+x_1, x_2\sin\theta_1+y_2\cos\theta_1+y_1)-(e^{i\theta_1}, x_1, y_1)\\
&+(e^{i(\theta_1+\theta_3)}, x_3\cos\theta_1-y_3\sin\theta_1+x_1, x_3\sin\theta_1+y_3\cos\theta_1+y_1)\\
=&(e^{i\sum\limits_{k=1}^3\theta_k}, (x_2+x_3)\cos\theta_1-(y_2+y_3)\sin\theta_1+x_1, (x_2+x_3)\sin\theta_1+(y_2+y_3)\cos\theta_1+y_1).
\end{flalign*}
We conclude that $a_1\circ(a_2+a_3)=a_1\circ a_2-a_1+a_1\circ a_3$, hence $(S^1\times\mathbb{R}^2, +, \circ)$ is a skew brace.

Second, we show that $(S^1\times\mathbb{R}^2, \circ, +)$ is also a skew brace. One computes
\begin{flalign*}
&a_1+(a_2\circ a_3)\\
=&(e^{i\theta_1}, x_1, y_1)+(e^{i(\theta_2+\theta_3)}, x_3\cos\theta_2-y_3\sin\theta_2+x_2, x_3\sin\theta_2+y_3\cos\theta_2+y_2)\\
=&(e^{i\sum\limits_{k=1}^3\theta_k}, x_3\cos\theta_2-y_3\sin\theta_2+x_1+x_2, x_3\sin\theta_2+y_3\cos\theta_2+y_1+y_2).
\end{flalign*}
And
\begin{flalign*}
&(a_1+a_2)\circ a_1'\circ(a_1+a_3)\\
=&(e^{i(\theta_1+\theta_2)}, x_1+x_2, y_1+y_2)\circ(e^{-i\theta_1}, -x_1\cos\theta_1-y_1\sin\theta_1, x_1\sin\theta_1-y_1\cos\theta_1)\\
&\circ(e^{i(\theta_1+\theta_3)}, x_1+x_3, y_1+y_3)\\
=&(e^{i\theta_2}, (-x_1\cos\theta_1-y_1\sin\theta_1)\cos(\theta_1+\theta_2)-(x_1\sin\theta_1-y_1\cos\theta_1)\sin(\theta_1+\theta_2)+x_1+x_2),\\
&(-x_1\cos\theta_1-y_1\sin\theta_1)\sin(\theta_1+\theta_2)+(x_1\sin\theta_1-y_1\cos\theta_1)\cos(\theta_1+\theta_2)+y_1+y_2))\\
&\circ(e^{i(\theta_1+\theta_3)}, x_1+x_3, y_1+y_3)\\
=&(e^{i(\theta_1+\theta_2+\theta_3)},\\
&(x_1+x_3)\cos\theta_2-(y_1+y_3)\sin\theta_2+\\
&(-x_1\cos\theta_1-y_1\sin\theta_1)\cos(\theta_1+\theta_2)-(x_1\sin\theta_1-y_1\cos\theta_1)\sin(\theta_1+\theta_2)+x_1+x_2,\\
&(x_1+x_3)\sin\theta_2+(y_1+y_3)\cos\theta_2+\\
&(-x_1\cos\theta_1-y_1\sin\theta_1)\sin(\theta_1+\theta_2)+(x_1\sin\theta_1-y_1\cos\theta_1)\cos(\theta_1+\theta_2)+y_1+y_2))\\
=&(e^{i\sum\limits_{k=1}^3\theta_k}, x_3\cos\theta_2-y_3\sin\theta_2+x_1+x_2, x_3\sin\theta_2+y_3\cos\theta_2+y_1+y_2).
\end{flalign*}
It follows that $a_1+(a_2\circ a_3)=(a_1+a_2)\circ a_1'\circ(a_1+a_3)$, and $(S^1\times\mathbb{R}^2, \circ, +)$ is a skew brace.
\end{proof}

\begin{corollary}\label{corollary3.7}
Both $(S^1\times\mathbb{R}^2, r_1)$ and $(S^1\times\mathbb{R}^2, r_2)$ are topological biquandles, where 
\begin{center}
$r_1((e^{i\theta_1}, x_1, y_1), (e^{i\theta_2}, x_2, y_2))=((e^{i\theta_2}, x_2\cos\theta_1-y_2\sin\theta_1, x_2\sin\theta_1+y_2\cos\theta_1), (e^{i\theta_1}, x_1\cos\theta_2+y_1\sin\theta_2, -x_1\sin\theta_2+y_1\cos\theta_2))$
\end{center}
and 
\begin{center}
$r_2((e^{i\theta_1}, x_1, y_1), (e^{i\theta_2}, x_2, y_2))=((e^{i\theta_2}, x_2\cos\theta_1+y_2\sin\theta_1, -x_2\sin\theta_1+y_2\cos\theta_1), (e^{i\theta_1}, x_1+x_2-x_2\cos\theta_1-y_2\sin\theta_1, y_1+y_2+x_2\sin\theta_1-y_2\cos\theta_1))$.
\end{center}
\end{corollary}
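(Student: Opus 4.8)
Here is how I would prove Corollary \ref{corollary3.7}.

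The plan is to read off both statements from Theorem \ref{theorem2.12} and Theorem \ref{theorem3.6}, leaving only an explicit computation and a routine continuity check. By Theorem \ref{theorem3.6} the triple $(S^1\times\mathbb{R}^2,+,\circ)$ is a skew brace, so Theorem \ref{theorem2.12} equips $S^1\times\mathbb{R}^2$ with a biquandle whose map is $r_1(a,b)=(b\star a,a\ast b)$, where $b\star a=-a+a\circ b$ and $a\ast b=(-a+a\circ b)'\circ a\circ b$. Applying the same two theorems to the skew brace $(S^1\times\mathbb{R}^2,\circ,+)$ — in which the roles of $+$ and $\circ$ are interchanged, so that the operations $-(\cdot)$ and $(\cdot)'$ are swapped as well — produces a second biquandle with map $r_2$, for which now $b\star a=a'\circ(a+b)$ and $a\ast b=-\bigl(a'\circ(a+b)\bigr)+(a+b)$. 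In particular the set-theoretic Yang--Baxter equation, the bijectivity of the maps $\star a$ and $\ast a$, and the diagonal condition all come for free and need no re-verification; what remains is to identify $r_1,r_2$ with the displayed formulas and to check continuity.

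For the identification I would substitute the explicit expressions for $+$, $\circ$, $-(\cdot)$ and $(\cdot)'$ recorded just before Theorem \ref{theorem3.6} into the formulas above and simplify. For $r_1$, computing $b\star a=-a+a\circ b$ is immediate and gives the first output coordinate; for $a\ast b$ one first notes $w:=-a+a\circ b=(e^{i\theta_2},x_2\cos\theta_1-y_2\sin\theta_1,x_2\sin\theta_1+y_2\cos\theta_1)$, then computes $w'$ and $w'\circ a\circ b$. The angular parts telescope to $e^{i\theta_1}$, and after using the rotation addition formulas the two linear coordinates collapse to $x_1\cos\theta_2+y_1\sin\theta_2$ and $-x_1\sin\theta_2+y_1\cos\theta_2$, which is exactly the stated second component of $r_1$. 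For $r_2$ one proceeds identically with $+$ and $\circ$ interchanged: $b\star a=a'\circ(a+b)$ simplifies to $(e^{i\theta_2},x_2\cos\theta_1+y_2\sin\theta_1,-x_2\sin\theta_1+y_2\cos\theta_1)$, and then $a\ast b=-(b\star a)+(a+b)$ simplifies to $(e^{i\theta_1},x_1+x_2-x_2\cos\theta_1-y_2\sin\theta_1,y_1+y_2+x_2\sin\theta_1-y_2\cos\theta_1)$, matching the statement.

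For continuity, every coordinate of $r_1$ and $r_2$ is a polynomial in $x_1,y_1,x_2,y_2$ and in $\cos\theta_i,\sin\theta_i$, while the $S^1$-coordinate of each output is $e^{i\theta_1}$ or $e^{i\theta_2}$. Since $\cos\theta$ and $\sin\theta$ are the (continuous) real and imaginary parts of the point $e^{i\theta}\in S^1\subset\mathbb{C}$, and since $e^{i(\theta_1+\theta_2)}$ is the (continuous) complex product of $e^{i\theta_1}$ and $e^{i\theta_2}$, all of these maps are continuous on $(S^1\times\mathbb{R}^2)^2$. Hence $\ast$ and $\star$ are continuous and $(S^1\times\mathbb{R}^2,r_1)$ and $(S^1\times\mathbb{R}^2,r_2)$ are topological biquandles. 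The only place any care is needed is the bookkeeping in the $a\ast b$ computations — getting the $\circ$-inverse right for $r_1$, and correctly transporting the defining formula of Theorem \ref{theorem2.12} through the interchange of $+$ and $\circ$ for $r_2$ — but this is routine algebra with no genuine obstacle; the identities $a\circ(-b+c)=a-a\circ b+a\circ c$ and $a\circ(b-c)=a\circ b-a\circ c+a$ used in the proof of Theorem \ref{theorem2.12} keep the simplifications mechanical.
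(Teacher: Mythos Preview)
Your proposal is correct and follows the same approach the paper takes (implicitly, since the paper states Corollary~\ref{corollary3.7} without proof, in parallel with Corollary~\ref{corollary3.2} which is deduced by combining Theorem~\ref{theorem2.12} with the skew brace theorem). Your explicit verification of the formulas for $r_1,r_2$ and the continuity check fill in exactly the details the paper leaves to the reader.
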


\begin{remark}
Since $(S^1\times\mathbb{R}^2, +)$ is abelian and $(S^1\times\mathbb{R}^2, \circ)$ is non-abelian, as a solution to the Yang-Baxter equation, it follows that $(S^1\times\mathbb{R}^2, r_1)$ is involutive and $(S^1\times\mathbb{R}^2, r_2)$ is non-involutive.
\end{remark}

\begin{remark}
Consider the topological space $S^1\times\mathbb{R}^2$ as $S^1\times\mathbb{C}$, now for any two points $(e^{i\theta_1}, \alpha_1)$, $(e^{i\theta_2}, \alpha_2)\in S^1\times\mathbb{C}$, the two binary operations $+$ and $\circ$ can be rewritten as
\begin{center}
$(e^{i\theta_1}, \alpha_1)+(e^{i\theta_2}, \alpha_2)=(e^{i(\theta_1+\theta_2)}, \alpha_1+\alpha_2)$
\end{center}
and 
\begin{center}
$(e^{i\theta_1}, \alpha_1)\circ(e^{i\theta_2}, \alpha_2)=(e^{i(\theta_1+\theta_2)}, \alpha_1+e^{i\theta_1}\alpha_2)$.
\end{center}
In particular, we have $(e^{i\theta_1}, \alpha_1)'=(e^{-i\theta_1}, -e^{-i\theta_1}\alpha_1)$.
\end{remark}

\begin{example}
Consider the trefoil knot $K$, which can be realized as the closure of $\sigma_1^3\in B_2$. Let us use $Y$ to denote the topological biquandle $(S^1\times\mathbb{C}, r_2)$. Then the map induced by $\sigma_1^3$ sends the element $((e^{i\theta_1}, \alpha_1), (e^{i\theta_2}, \alpha_2))\in (S^1\times\mathbb{C})^2$ to $((e^{i\theta_2}, (e^{-i\theta_1}-e^{-i(\theta_1+\theta_2)})\alpha_1+(e^{-i\theta_1}-e^{-i(\theta_1+\theta_2)}+e^{-i(2\theta_1+\theta_2)})\alpha_2), (e^{i\theta_1}, (1-e^{-i\theta_1}+e^{-i(\theta_1+\theta_2)})\alpha_1+(1-e^{-i\theta_1}+e^{-i(\theta_1+\theta_2)}-e^{-i(2\theta_1+\theta_2)})\alpha_2))\in (S^1\times\mathbb{C})^2$. It follows that
\begin{center}
$J_Y(K)=\{((e^{i\theta_1}, \alpha_1), (e^{i\theta_2}, \alpha_2))\in (S^1\times\mathbb{C})^2|\theta_1=\theta_2, (1-e^{-i\theta_1}+e^{-2i\theta_1})(\alpha_1-e^{-i\theta_1}\alpha_2)=0\}$.
\end{center}
\end{example}

\subsection{Biquandle structures on $S^3\times S^1$}
We end this paper with an example of biquandle structures on a closed manifold. Let us consider the product manifold $S^3\times S^1=\operatorname{SU}(2)\times\operatorname{U}(1)$ equipped with the following two binary operations
\begin{center}
$(A_1, z_1)+(A_2, z_2)=(A_1A_2, z_1z_2)$
\end{center}
and
\begin{center}
$(A_1, z_1)\circ(A_2, z_2)=(A_1\begin{pmatrix}z_1 & 0\\0 & 1\end{pmatrix}A_2\begin{pmatrix}z_1^{-1} & 0\\0 & 1\end{pmatrix}, z_1z_2)$.
\end{center}
It is evident to see that both $(S^3\times S^1, +)$ and $(S^3\times S^1, \circ)$ are groups. In particular, the inverse operation of $\circ$ can be read as
\begin{center}
$(A_1, z_1)\circ^{-1}(A_2, z_2)=(A_1\begin{pmatrix}z_1z_2^{-1} & 0\\0 & 1\end{pmatrix}A_2^{-1}\begin{pmatrix}z_1^{-1}z_2 & 0\\0 & 1\end{pmatrix}, z_1z_2^{-1})$.
\end{center}

\begin{theorem}\label{theorem3.11}
The triple $(S^3\times S^1, +, \circ)$ and the triple $(S^3\times S^1, \circ, +)$ are both skew braces.
\end{theorem}
\begin{proof}
First we verify that
\begin{center}
$(A_1, z_1)\circ((A_2, z_2)+(A_3, z_3))=(A_1, z_1)\circ(A_2, z_2)-(A_1, z_1)+(A_1, z_1)\circ(A_3, z_3)$.
\end{center}
One computes
\begin{flalign*}
&(A_1, z_1)\circ((A_2, z_2)+(A_3, z_3))\\
=&(A_1, z_1)\circ(A_2A_3, z_2z_3)\\
=&(A_1\begin{pmatrix}z_1 & 0\\0 & 1\end{pmatrix}A_2A_3\begin{pmatrix}z_1^{-1} & 0\\0 & 1\end{pmatrix}, z_1z_2z_3).
\end{flalign*}
One the other hand,
\begin{flalign*}
&(A_1, z_1)\circ(A_2, z_2)-(A_1, z_1)+(A_1, z_1)\circ(A_3, z_3)\\
=&(A_1\begin{pmatrix}z_1 & 0\\0 & 1\end{pmatrix}A_2\begin{pmatrix}z_1^{-1} & 0\\0 & 1\end{pmatrix}, z_1z_2)-(A_1, z_1)+(A_1\begin{pmatrix}z_1 & 0\\0 & 1\end{pmatrix}A_3\begin{pmatrix}z_1^{-1} & 0\\0 & 1\end{pmatrix}, z_1z_3)\\
=&(A_1\begin{pmatrix}z_1 & 0\\0 & 1\end{pmatrix}A_2\begin{pmatrix}z_1^{-1} & 0\\0 & 1\end{pmatrix}A_1^{-1}, z_2)+(A_1\begin{pmatrix}z_1 & 0\\0 & 1\end{pmatrix}A_3\begin{pmatrix}z_1^{-1} & 0\\0 & 1\end{pmatrix}, z_1z_3)\\
=&(A_1\begin{pmatrix}z_1 & 0\\0 & 1\end{pmatrix}A_2A_3\begin{pmatrix}z_1^{-1} & 0\\0 & 1\end{pmatrix}, z_1z_2z_3).
\end{flalign*}
This finishes the proof of the statement that $(S^3\times S^1, +, \circ)$ is a skew brace. Next we turn to the proof of the statement that $(S^3\times S^1, \circ, +)$ is a skew brace, which needs to verify that 
\begin{center}
$(A_1, z_1)+(A_2, z_2)\circ(A_3, z_3)=((A_1, z_1)+(A_2, z_2))\circ^{-1}(A_1, z_1)\circ((A_1, z_1)+(A_3, z_3))$.
\end{center}
On one hand, we have
\begin{flalign*}
&(A_1, z_1)+(A_2, z_2)\circ(A_3, z_3)\\
=&(A_1, z_1)+(A_2\begin{pmatrix}z_2 & 0\\0 & 1\end{pmatrix}A_3\begin{pmatrix}z_2^{-1} & 0\\0 & 1\end{pmatrix}, z_2z_3)\\
=&(A_1A_2\begin{pmatrix}z_2 & 0\\0 & 1\end{pmatrix}A_3\begin{pmatrix}z_2^{-1} & 0\\0 & 1\end{pmatrix}, z_1z_2z_3).
\end{flalign*}
On the other hand,
\begin{flalign*}
&((A_1, z_1)+(A_2, z_2))\circ^{-1}(A_1, z_1)\circ((A_1, z_1)+(A_3, z_3))\\
=&(A_1A_2, z_1z_2)\circ^{-1}(A_1, z_1)\circ(A_1A_3, z_1z_3)\\
=&(A_1A_2\begin{pmatrix}z_2 & 0\\0 & 1\end{pmatrix}A_1^{-1}\begin{pmatrix}z_2^{-1} & 0\\0 & 1\end{pmatrix}, z_2)\circ(A_1A_3, z_1z_3)\\
=&(A_1A_2\begin{pmatrix}z_2 & 0\\0 & 1\end{pmatrix}A_1^{-1}\begin{pmatrix}z_2^{-1} & 0\\0 & 1\end{pmatrix}\begin{pmatrix}z_2 & 0\\0 & 1\end{pmatrix}A_1A_3\begin{pmatrix}z_2^{-1} & 0\\0 & 1\end{pmatrix}, z_1z_2z_3)\\
=&(A_1A_2\begin{pmatrix}z_2 & 0\\0 & 1\end{pmatrix}A_3\begin{pmatrix}z_2^{-1} & 0\\0 & 1\end{pmatrix}, z_1z_2z_3).
\end{flalign*}
The proof is finished.
\end{proof}

\begin{corollary}
Both $(S^3\times S^1, r_1)$ and $(S^3\times S^1, r_2)$ are topological biquandles, where 
\begin{center}
$r_1((A_1, z_1), (A_2, z_2))=((\begin{pmatrix}z_1 & 0\\0 & 1\end{pmatrix}A_2\begin{pmatrix}z_1^{-1} & 0\\0 & 1\end{pmatrix}, z_2), (\begin{pmatrix}z_1z_2^{-1} & 0\\0 & 1\end{pmatrix}A_2^{-1}\begin{pmatrix}z_1^{-1} & 0\\0 & 1\end{pmatrix}A_1\begin{pmatrix}z_1 & 0\\0 & 1\end{pmatrix}A_2\begin{pmatrix}z_1^{-1}z_2 & 0\\0 & 1\end{pmatrix}, z_1))$,
\end{center}
and
\begin{center}
$r_2((A_1, z_1), (A_2, z_2))=((\begin{pmatrix}z_1^{-1} & 0\\0 & 1\end{pmatrix}A_2\begin{pmatrix}z_1 & 0\\0 & 1\end{pmatrix}, z_2), (\begin{pmatrix}z_1^{-1} & 0\\0 & 1\end{pmatrix}A_2^{-1}\begin{pmatrix}z_1 & 0\\0 & 1\end{pmatrix}A_1A_2, z_1))$.
\end{center}
\end{corollary}

\begin{remark}
The result of Theorem \ref{theorem3.11} can be extended to $\operatorname{SU}(n)\times\operatorname{U}(1)$.
\end{remark}

\begin{remark}
We point out that actually all the biquandles $(\mathbb{R}^3=\mathbb{R}^2\times\mathbb{R}^1, r_1), (\mathbb{R}^2\times S^1, r_1)$ and $(S^3\times S^1, r_1)$ are in fact special instances of the third construction in Example \ref{example2.11}. Namely, one Lie group is expressed as the direct product of two Lie groups, while the other one corresponds to the semidirect product of the same pair of Lie groups. By contrast, it seems that the topological biquandles $(\mathbb{R}^3, r_2), (\mathbb{R}^2\times S^1, r_2)$ and $(S^3\times S^1, r_2)$ cannot be obtained in this way. For the three aforementioned examples, we observe that certain sets endowed with two distinct group structures form skew braces under either ordering of the two group operations. As a further illustration, consider the 4-element set equipped with two group structures: the cyclic group $\mathbb{Z}_4$ and the dihedral group $D_4$. One can readily verify that these two group structures yield two distinct 4-element skew braces. It is an interesting question to characterize the conditions under which two groups $(G, +)$ and $(G, \circ)$ over the same underlying set satisfy that both $(G, +, \circ)$ and $(G, \circ, +)$ are skew braces.
\end{remark}

\section*{Acknowledgements}
The author is grateful to Yang Su for his valuable suggestions. Zhiyun Cheng was supported by the NSFC grant 12371065.

\end{document}